\newtheorem{theorem}{Theorem}
\newtheorem{proposition}{Proposition}
\newtheorem{definition}{Definition}
\newtheorem{corollary}{Corollary}
\def\n{\noindent}
\date{}
\def\n{\noindent}
\title{Universal Chord Theorem and a Topological Analysis}
\author{Ion Ciudin \and Eugen J. Ionașcu}
\date{}
\begin{document}

\maketitle

\begin{abstract}
We study the set of chords of a real-valued continuous function on $[0,1]$ with
$f(0)=f(1)=0$. We describe which chords may appear as isolated points and provide
examples illustrating our characterization. Maximal Hopf sets are introduced and analyzed.
\end{abstract}

\section{Introduction}




  The story of the Universal Chord Theorem (UCT) is a long one and full of twists and turns, and it has recently resurfaced in the literature (see \cite{Diana2024}). 

It goes back to 1806, when 
André-Marie Ampère  \cite{Ampere} proved the following equivalent statement: 

\par \vspace{0.2in}  
{\it \color{blue}  ``The graph of every continuous function 
on the compact interval $[0,1]$, with $f(0)=f(1)=0$, has at least one horizontal chord of length $\frac{1}{n}$, i.e., given $\bf n$ a natural number, there exist $a$ and $b$ in $[0,1]$ such that $|a-b|=\frac{1}{\bf n}$ and  $f(a)=f(b)$." } 
\par \vspace{0.2in}  
$$\underset{Figure\ 1.\  Chord \ of \ length\   0.233843 }{
\underset{f(x)=\sin(2\pi x) +\sin(4\pi x)+\sin(6\pi x) } {\epsfig{file=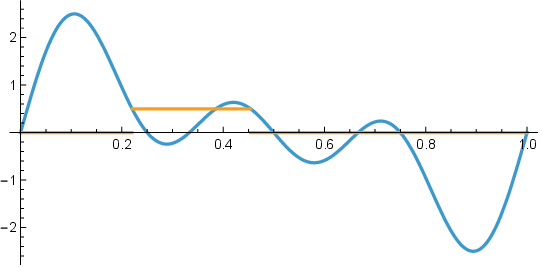,height=1in,width=2in}}\ \ \underset{ y=f(x) \ and\  y=f(x-a) }{\epsfig{file=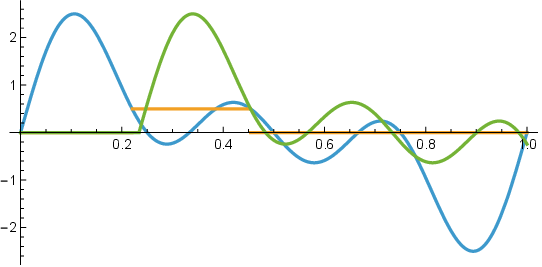,height=1in,width=2in}}}$$ 

In other words, the chords in the set ${\cal P} :=\{0\}\cup \{\frac{1}{n} : n\in \mathbb N\}$ 
are always in the set of chords of every continuous function $f$ as described above. Our notation here for $\mathbb N$ stands for all positive integers. This is only half of what the standard UCT says. 

For some notations, we denote by ${\cal F}$ the class of all continuous functions 
on the compact interval $[0,1]$, with $f(0)=f(1)=0$, and 

 $$H(f):=\{ \ell \in [0,1]|\  \  \  f(x-\ell)=f(x) \ \ \text{for some} \ \ x\in [\ell,1]\}, $$
\n the set of chords for the function $f$. The UCT is known nowadays in a more precise form and it is attributed to Lévy (\cite{Levy}),  who in 1934, proved the other inclusion in the equality 

\begin{equation}\label{eq1}
{\cal P}=\cap_{f\in {\cal F} } H(f).
\end{equation}

But let us include the equivalent version as stated in \cite{Burns} (Theorem 4)  which uses less notations.

\begin{theorem}[Universal Cord Theorem]
For a given length $\ell$, the necessary and sufficient condition for a continuous function $f:[0,L]\to \mathbb R$ with $f(0)=f(L)$ to have a horizontal chord of length $\ell$ is that $\ell=\frac{L}{m}$ for some positive integer $m$. 
\end{theorem}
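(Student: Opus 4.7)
The plan is to handle the two implications separately. For the sufficiency direction — assuming $\ell = L/m$ for some positive integer $m$ — I would introduce the auxiliary continuous function $g:[0,L-\ell]\to\mathbb{R}$ defined by $g(x)=f(x+\ell)-f(x)$ and show that $g$ has a zero. The main observation is the telescoping identity
$$\sum_{k=0}^{m-1} g\!\left(\tfrac{kL}{m}\right) \;=\; \sum_{k=0}^{m-1}\Bigl[f\!\left(\tfrac{(k+1)L}{m}\right)-f\!\left(\tfrac{kL}{m}\right)\Bigr] \;=\; f(L)-f(0)\;=\;0.$$
Either one of these $m$ sampled values is already zero, or two consecutive values have opposite signs; in the latter case the Intermediate Value Theorem gives an interior zero of $g$, which is exactly a horizontal chord of length $\ell$.

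For the necessity direction, assuming $\ell\in(0,L)$ with $L/\ell\notin\mathbb{N}$, I would construct an explicit continuous $f:[0,L]\to\mathbb{R}$ with $f(0)=f(L)=0$ and no horizontal chord of length $\ell$. My candidate is
$$f(x)\;=\;\sin^{2}\!\left(\frac{\pi x}{\ell}\right)\;-\;\frac{x}{L}\,\sin^{2}\!\left(\frac{\pi L}{\ell}\right).$$
One checks $f(0)=0$ and $f(L)=0$ directly, and since $\sin^{2}(\pi(x+\ell)/\ell)=\sin^{2}(\pi x/\ell)$, the $\ell$-shift satisfies
$$f(x+\ell)-f(x)\;=\;-\frac{\ell}{L}\,\sin^{2}\!\left(\frac{\pi L}{\ell}\right),$$
which is a nonzero constant because $L/\ell$ is not an integer, so $\sin^{2}(\pi L/\ell)\neq 0$. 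Thus no point $x\in[0,L-\ell]$ can satisfy $f(x+\ell)=f(x)$, which settles necessity.

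The conceptual obstacle is not in either verification but in discovering the right counterexample. The term $\sin^{2}(\pi x/\ell)$ is chosen precisely because it is periodic with period $\ell$ and hence contributes zero to the shift, leaving room for a linear correction $-(x/L)\sin^{2}(\pi L/\ell)$ that simultaneously pins down the endpoint condition $f(L)=0$ and injects a constant, nonvanishing $\ell$-shift. Once this combination is identified, both directions of the equivalence fall out from short, direct computations.
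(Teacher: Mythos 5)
Your proof is correct and follows essentially the route the paper takes: your necessity counterexample $\sin^{2}(\pi x/\ell)-\tfrac{x}{L}\sin^{2}(\pi L/\ell)$ is the same idea as L\'evy's function $|\sin(\pi x/\ell)|-x|\sin(\pi/\ell)|$ quoted in the paper (an $\ell$-periodic nonnegative term minus a linear correction forcing a constant nonzero $\ell$-shift), and your telescoping-plus-IVT argument for sufficiency is the standard one the paper attributes to Amp\`ere without spelling it out.
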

To prove the other inclusion in  (\ref{eq1}), one needs to show that if $\ell\not \in {\cal P}$ then there exists a function 
$f_{\ell}$ for which $H(f_{\ell})$ does not contain $\ell$.  Paul Levi's example was quite simple: take 
$$f_{\ell}(x)=|\sin (\frac{\pi x}{\ell}) |-x |\sin (\frac{\pi }{\ell}) |$$
and observe that because $|\sin x|$ is periodic with period $\pi$, we have for all $x\in [\ell , 1]$, the difference $f_{\ell}(x-\ell)-f_{\ell}(x)=\ell |\sin (\frac{\pi }{\ell}) |\not =0$ given the hypothesis on $\ell$ (see Figure~2).

$$\underset{Figure\ 2.\  \text{L\'{e}vi's  function,  for} \ \ell=\frac{3}{11}} {\epsfig{file=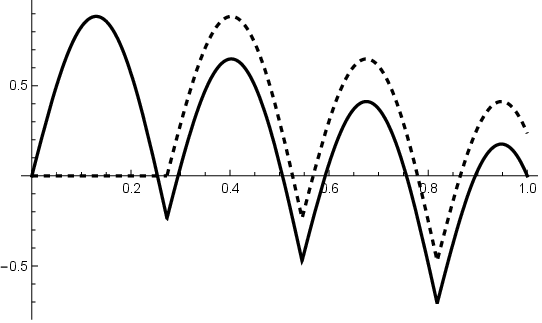,height=1in,width=2in}}$$

In Figure~1 we have included an example and also a geometric way to think about the existence of a chord of a given length $c$, by translating the graph of $y=f(x)$ to the right by $c$ and look for the intersections of the two graphs. We observe that in this case we have multiple points of intersection.

\par\vspace{0.1in} 
In  \cite{Burns}, one can find an elegant account of UCT and a real world application of it in races. We include Proposition 2 to give the reader a flavour of this kind of applications.

 \begin{proposition} [\cite{Burns}] \label{Burns2017}
 If the race distance is a whole number of miles, then some mile must be covered at exacly the average pace. 
\end{proposition}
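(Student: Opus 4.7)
The plan is to reduce this directly to the Universal Chord Theorem with $L$ equal to the whole number of miles in the race and chord length $\ell=1$, which is permitted precisely because $1 = L/L$, i.e., $m=L$ in the statement of the theorem.

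First I would set up the problem analytically. Let $L\in\mathbb{N}$ be the race distance in miles and let $T:[0,L]\to\mathbb{R}$ denote the time elapsed after covering $x$ miles, which is continuous with $T(0)=0$. Write $\bar{v}=T(L)/L$ for the average pace (time per mile). The claim that some mile is covered at the average pace means there exists $a\in[0,L-1]$ with $T(a+1)-T(a)=\bar{v}$.

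Next I would manufacture a function in the class $\mathcal{F}$ to which the UCT applies. Define
\[
f(x) := T(x) - \bar{v}\,x, \qquad x\in[0,L].
\]
Then $f$ is continuous, $f(0)=0$, and $f(L)=T(L)-\bar{v}L=0$, so $f(0)=f(L)$. By the Universal Chord Theorem (applied to $f$ on $[0,L]$ with $\ell=1$, valid since $1=L/L$), there exists $a\in[0,L-1]$ such that $f(a+1)=f(a)$. Unfolding this equality gives $T(a+1)-\bar{v}(a+1)=T(a)-\bar{v}a$, i.e.\ $T(a+1)-T(a)=\bar{v}$, which is exactly the required conclusion.

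There is no real obstacle here; the entire content is packaging the racing scenario so that the theorem applies. The only subtle point worth flagging is that the theorem is invoked with interval length $L$ (not $1$), which is why the admissible chord lengths are $L/m$ and $\ell=1$ is admissible precisely because $L$ is a positive integer; if $L$ were not an integer, the conclusion would fail in general by the sharpness half of the UCT (via a rescaled Lévy-type counterexample).
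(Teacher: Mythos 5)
Your proposal is correct and is exactly the intended argument: the paper states this proposition only as an illustration of the UCT (citing \cite{Burns}) without proof, and the standard proof is precisely your reduction via $f(x)=T(x)-\bar v x$ on $[0,L]$ with chord length $\ell=1=L/L$. Nothing is missing.
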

 
 The next important development in this subject was done at short time after Levy's work, by Heinz Hopf (\cite{Hopf})  in 1937.  His result is quite surprising. 
We say that a set $U$ of real numbers is additive if it is closed under the addition, i.e., for every $a$ and $b$ in $U$, we have $a+b\in U$. Let us denote by $H(f)^*$ the complement of $H(f)$ in $[0,\infty)$.
\begin{theorem}[Hopf]\label{hopf}
 The set $H(f)^*$ is open and additive. Given a set $U$ of positive numbers, which is open and additive, containing $(1,\infty)$ as a maximal interval ($1\not\in U$), there exists a function 
$f\in \cal F$ such that  $H(f)=[0,\infty)\setminus U$.
\end{theorem}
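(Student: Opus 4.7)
The theorem has two directions, which I treat separately. For openness of $H(f)^*$ (equivalently closedness of $H(f)$), I would use a compactness argument: suppose $\ell_n\in H(f)$ with $\ell_n\to\ell$; pick witnesses $x_n\in[\ell_n,1]$ with $f(x_n)=f(x_n-\ell_n)$, pass to a convergent subsequence $x_n\to x_\infty\in[\ell,1]$, and conclude $\ell\in H(f)$ by continuity of $f$. For additivity I would introduce $\phi_s(x):=f(x)-f(x-s)$ on $[s,1]$, so that $s\in H(f)$ iff $\phi_s$ has a zero. If $a,b\notin H(f)$, both $\phi_a,\phi_b$ are continuous and nonvanishing on their connected domains, hence of constant sign. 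Using the telescoping identity $\phi_{a+b}(x)=\phi_a(x)+\phi_b(x-a)$ on $[a+b,1]$, when $\phi_a$ and $\phi_b$ share a sign then so does $\phi_{a+b}$, forcing $a+b\notin H(f)$. The opposite-sign subcase (say $\phi_a>0,\phi_b<0$) is the delicate one: I would iterate the sign hypotheses to produce strictly monotone chains $0<f(a)<f(2a)<\cdots$ and $0>f(b)>f(2b)>\cdots$ from $f(0)=0$, together with their reverse analogs $f(1-ka)<0$ and $f(1-jb)>0$ propagated from $f(1)=0$, and then extract an arithmetic collision between these chains (an exact equality $ka=jb$ or a mixed relation such as $ka+jb=1-k'a-j'b$ in the commensurable case, or a Dirichlet-type near-coincidence combined with continuity in the incommensurable case) to produce a contradiction by forcing $f$ to take opposite signs at the same point.

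For the converse, I would construct $f$ from Levy-type building blocks tailored to each gap of $U^c$. First I would analyze the structure of admissible $U$: writing $U\cap[0,1]=\bigsqcup_k(\alpha_k,\beta_k)$ as a disjoint union of maximal open intervals, additivity forces the endpoints to satisfy $2\alpha_k,\alpha_k+\alpha_j\in U$ and similar relations. For each gap I would refine Levy's kernel $|\sin(\pi x/\alpha_k)|-x|\sin(\pi/\alpha_k)|$ to a function $g_k$ whose difference $g_k(x)-g_k(x-\ell)$ is bounded uniformly away from zero for \emph{every} $\ell\in(\alpha_k,\beta_k)$, not only at $\ell=\alpha_k$. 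Then $f:=\sum_k c_k g_k$ with rapidly decreasing weights $c_k$, plus a linear correction to enforce $f(0)=f(1)=0$, is the candidate function in $\mathcal{F}$.

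The main obstacle is the converse, specifically showing $H(f)=[0,\infty)\setminus U$ exactly. One must verify both that (i) no length in $U$ arises as a chord of $f$, which requires the weights $c_k$ to be small enough so that each kernel's sign-rigidity is preserved under the superposition, and (ii) every length in the complement is realized as a chord, so that no spurious gaps are introduced. Step (ii) is the most delicate: it relies crucially on the additive structure of $U$, which guarantees that every ``interaction length'' generated by combining contributions from distinct kernels $g_k$ lands back inside $U$ and therefore does not accidentally erase a chord that should be present. A secondary but nontrivial obstacle is executing the opposite-sign subcase of additivity described above, which must be handled separately in the commensurable and incommensurable regimes.
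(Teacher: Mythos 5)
First, note that the paper does not actually prove this theorem itself; it defers to \cite{Diana2024} for ``a simple proof,'' so your proposal has to stand on its own. The first half is mostly sound: the compactness argument for closedness of $H(f)$ is correct, and the telescoping identity $\phi_{a+b}(x)=\phi_a(x)+\phi_b(x-a)$ settles the same-sign case of additivity. The opposite-sign case, however, has a real gap as sketched: the four chains you name, $f(ka)>0$, $f(jb)<0$, $f(1-ka)<0$, $f(1-jb)>0$, need not produce any collision, exact or approximate. For instance, with $a=0.35$ and $b=0.55$ the positive points are $\{0.35,0.70,0.45\}$ and the negative ones are $\{0.55,0.65,0.30\}$, pairwise at distance at least $0.05$, so neither the commensurable nor the Dirichlet route closes from these chains alone; you would need mixed walks with steps $+a$ and $-b$ confined to $[0,1]$, and controlling those in the incommensurable case is delicate. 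You are missing a much shorter route: if $a+b>1$ then $a+b\notin H(f)$ is vacuous, since every chord has length at most $1$; and if $a+b\le 1$, then $\phi_a>0$ forces every maximizer of $f$ to lie in $(1-a,1]$ while $\phi_b<0$ forces every maximizer to lie in $[0,b)$, and these sets are disjoint precisely when $a+b\le 1$, so the opposite-sign configuration simply cannot occur there.

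The converse is where the proposal genuinely fails. Superposing Levy-type kernels $f=\sum_k c_k g_k$ with rapidly decreasing weights does not preserve each kernel's exclusion property: for $k\ge 2$ and $\ell\in(\alpha_k,\beta_k)$, the dominant term $c_1g_1$ will in general have chords of length $\ell$ realized transversally, i.e., $x\mapsto c_1\bigl(g_1(x)-g_1(x-\ell)\bigr)$ changes sign, and adding the tiny perturbation $\sum_{j\ge 2}c_jg_j$ cannot remove a sign change of the dominant term. Making the $c_k$ smaller makes this worse, not better, so lengths in $(\alpha_k,\beta_k)\subset U$ survive in $H(f)$; your stated remedy (``weights small enough so that each kernel's sign-rigidity is preserved'') points in exactly the wrong direction. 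What is actually required is a single function that is sign-rigid at every $\ell\in U$ simultaneously, which is the whole difficulty and is not supplied by a one-kernel-per-component decomposition. Point (ii), that every $\ell\notin U$ really is a chord, is also not addressed by any concrete mechanism; the standard way to get it for free is to arrange $f(\ell)=0$ for every $\ell\in[0,1]\setminus U$, so that the pair $(0,\ell)$ is itself a chord, and your construction has no such feature. Hopf's construction (and the proof in the cited reference) builds $f$ directly and globally from the closed set $[0,1]\setminus U$, using the additivity of $U$ to verify sign-rigidity at all of $U$ at once rather than assembling $f$ one component of $U$ at a time.
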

For a simple proof of Hopf's theorem and other results one can refer to  \cite{Diana2024}.


\par\vspace{0.1in} 

Looking at the identity (\ref{eq1}), one may think that for every $n\in \mathbb N$, $n\ge 2$,  there exists a function $f_n\in {\cal F}$ which has a chord set $H(f_n)$ with 

$$H(f_n) \subset [0,\frac{1}{n}] \cup \bigcup_{k=1}^{n-1} \{\frac{1}{k}\}.$$
But this cannot happen for the several reasons. We will show that the points $\frac{1}{n}$ cannot be isolated in $H(f)$. The second reason is shown in \cite{Diana2024}. 
\begin{theorem}[\cite{Diana2024}]
For every $f\in \cal F$,  the set $H(f)$ must have measure greater or equal to $\frac{1}{2}$. 
\end{theorem}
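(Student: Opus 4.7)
The plan is to derive the bound from Hopf's theorem (already in our toolbox) together with a short pigeonhole argument. The two facts I will use about $f$ are that $H(f)^*$ is additive (Theorem~\ref{hopf}) and that $1\in H(f)$, which is trivial from $f(0)=f(1)=0$.

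Set $A:=H(f)^*\cap(0,1)$. Because $\{0,1\}\subset H(f)$ is measure-zero, proving $|H(f)|\geq 1/2$ is equivalent to proving $|A|\leq 1/2$. The structural observation is this: if $\ell_1,\ell_2\in A$, then additivity of $H(f)^*$ forces $\ell_1+\ell_2\in H(f)^*$, and hence $\ell_1+\ell_2\neq 1$ (since $1\in H(f)$). Writing $1-A:=\{1-\ell:\ell\in A\}$, this says precisely $A\cap(1-A)=\emptyset$.

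Now $A$ and $1-A$ are measurable subsets of $(0,1)$ of equal Lebesgue measure. If one had $|A|>1/2$, then $|A|+|1-A|>1=|(0,1)|$ would force $A\cap(1-A)$ to be nonempty (in fact of positive measure), contradicting the previous step. Hence $|A|\leq 1/2$, and the bound follows. I do not anticipate a genuine obstacle here: once Hopf's theorem is invoked, the argument is a one-line pigeonhole. The only housekeeping point is measurability of $H(f)$, which is immediate because $H(f)$ is closed—a limit of chord-lengths realizes a chord again by compactness of $[0,1]$ applied to the corresponding sequence of endpoints and continuity of $f$.
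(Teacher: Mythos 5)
Your argument is correct and is essentially the argument the paper itself sketches in Section~\ref{constructions}: there, the reflection $\phi(x)=1-x$ is applied to $V=H(f)^*\cap(0,1)$, the additivity of $H(f)^*$ together with $1\in H(f)$ gives $\phi(V)\cap V=\emptyset$, and measure preservation yields $2m(V)\le 1$. Your pigeonhole phrasing with $A$ and $1-A$ is the same reflection-across-$\tfrac12$ idea, so there is nothing further to add.
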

\begin{definition}\label{hopfset} Let us call a set $U$ which is open, additive and of the form $V\cup (1,\infty)$ with $V\subset (0,1)$ 
a {\sf Hopf set}. If in addition, we have $m(V)=\frac{1}{2}$, we will refer to $U$ as a  {\sf maximal Hopf set}. We used $m( A)$ for the Lebesgue measure of the set $A$.  
\end{definition}

In this paper we are interested in the isolated points and the  accumulation points of $H(f)$. In general the additive open sets of positive real numbers such as $H(f)^*$ are quite complicated. We also investigate some possible ways to construct such sets and address a question posed in \cite{Diana2024}. 

In what follows we will assume that  $H(f)^*$  is $V\cup (1,\infty)$ for some open set in $(0,1)$ which can be empty.

\section{Points in ${\cal P}\setminus \{1\}$ cannot be isolated}\label{isolatedpoints}

The following result is well known but we will include a proof for the convenience of the reader based on Hopf's Theorem
\begin{proposition}\label{openinterval}
For $f\in \cal F$, there exists an interval $[0,\beta]$, with $\beta>0$, contained in $H(f)$.  
\end{proposition}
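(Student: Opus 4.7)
The plan is to argue by contradiction, exploiting the structural information Hopf's Theorem gives about $H(f)^*$. By that theorem $H(f)^*$ is open and additive; also, since $f(0)=f(1)$ one has $1 \in H(f)$, so $1 \notin H(f)^*$. It suffices to show that $0$ is not an accumulation point of $H(f)^*$, for then $[0,\beta] \subset H(f)$ for any $\beta$ below $\inf(H(f)^* \cap (0,1))$.

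The first key step is a reflection observation: for any $\ell \in H(f)^*$, one must have $1-\ell \in H(f)$. Otherwise $1-\ell$ would also lie in $H(f)^*$, and additivity would give $1 = \ell+(1-\ell) \in H(f)^*$, contradicting $1 \in H(f)$. Iterating the additivity, every multiple $k\ell$ with $k \in \mathbb{N}$ belongs to $H(f)^*$, and in particular for each $k$ with $k\ell \le 1$ the same reflection yields $1-k\ell \in H(f)$ (the boundary case $k\ell=1$ is automatically ruled out, since it would place $1$ in $H(f)^*$).

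Now I would assume, toward contradiction, that $0$ is an accumulation point of $H(f)^*$. Given any target $x \in (0,1)$ and any $\epsilon \in (0, 1-x)$, I pick $\ell \in H(f)^* \cap (0,\epsilon)$ and set $k=\lfloor (1-x)/\ell \rfloor$. Then $k \ge 1$ and $1-k\ell \in [x, x+\ell) \subset [x, x+\epsilon)$, with $1-k\ell \in H(f)$ by the previous paragraph. Hence $H(f)$ is dense in $[0,1]$. But $H(f)$ is closed in $[0,\infty)$ (equivalently, $H(f)^*$ is open), so $H(f)=[0,1]$, forcing $H(f)^* \cap [0,1] = \emptyset$—contradicting the very existence of our small $\ell$.

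The main obstacle, more of a bookkeeping check than a deep difficulty, is the density approximation: one must verify that $k$ is at least $1$ (secured by $\ell < 1-x$), that $k\ell$ never lands exactly on $1$ (secured by $1 \notin H(f)^*$), and that the estimate $|(1-k\ell)-x|<\epsilon$ holds uniformly over $x \in (0,1)$. All three are routine consequences of the floor-function bounds $1-x-\ell < k\ell \le 1-x$, but they deserve to be stated explicitly before concluding that $H(f)$ is dense in $[0,1]$.
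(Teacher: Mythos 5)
Your proof is correct, but it takes a genuinely different route from the paper's. The paper also invokes Hopf's theorem and assumes a sequence $x_m\to 0$ inside $H(f)^*$, but its contradiction is purely local: it picks a maximal open interval $(a,b)$ of $V=H(f)^*\cap(0,1)$ and writes $b=(b-x_m)+x_m$ for $m$ large, so additivity forces the endpoint $b$ into $V$, contradicting maximality of $(a,b)$. You instead combine two observations — the reflection fact that $\ell\in H(f)^*\cap(0,1)$ forces $1-\ell\in H(f)$ (else $1=\ell+(1-\ell)\in H(f)^*$), and closure of $H(f)^*$ under integer multiples — to show that arbitrarily small elements of $H(f)^*$ would make $\{1-k\ell\}$ and hence $H(f)$ dense, so $H(f)=[0,1]$ by closedness, contradicting the existence of those small elements. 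Your bookkeeping ($k\ge 1$ from $\ell<1-x$, $k\ell<1$, and $1-k\ell\in[x,x+\epsilon)$) all checks out, as does the trivial point $0\in H(f)$ needed to pass from ``$0$ is not an accumulation point of $H(f)^*$'' to $[0,\beta]\subset H(f)$. The paper's argument is shorter and works for any open additive set without reference to the special role of $1$; yours is longer but exposes the reflection mechanism $\ell\mapsto 1-\ell$ that the paper only deploys later (in the measure bound $m(V)\le\frac12$ and in Theorem~\ref{symmetry}), and it shows the stronger intermediate fact that a single small chord gap would force $H(f)$ to be all of $[0,1]$. Either proof is acceptable.
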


\begin{proof} If $H(f)$ is $[0,1]$ there is nothing to prove, so we may assume that $H(f)^*=V\cup (1,\infty)$ with $V$ nonempty. By way of contradiction, if $H(f)$ doesn't contain an interval of the form $[0,\beta]$, for every $m\in \mathbb N$ there exists $x_m$ not in $H(f)$ so that $x_m<\frac{1}{m}$. Thus, $H(f)^*$ contains the sequence $\{x_m\}$ convergent to $0$. We take a maximal interval in $V$ say $(a,b)$. In other words $a, b\not\in V$. Hence $b-x_m$ is in $(a,b)$ for $m$ big enough. Then $b=(b-x_m)+x_m\in V$ a contradiction. 
\end{proof}
In relation to this fact, there is a result of Levit (\cite{Levit}) in 1963 which states: ``If $f\in \cal F$ has $n$ changes of signs, then $\beta$ in this theorem can be taken to be $\frac{1}{\lfloor \frac{n+3}{2}\rfloor}$."

Let us point out that Proposition~\ref{openinterval} is more about the open additive sets of real numbers.
If we take $V=(1/2,1)$ we have a maximal Hopf set. A function $f$ which has $H(f)=[0,1]\setminus U$ is $f(x)=\sin (2\pi x)$. So, we see that the point $1$ can be isolated in $H(f)$. 
\begin{theorem}\label{nonisolated}
 For $n\in \mathbb N$, $n\ge 2$ and $f\in \cal F$ , the point $\frac{1}{n}$ cannot be isolated in $H(f)$.
\end{theorem}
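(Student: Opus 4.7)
The plan is to argue by contradiction, leveraging that $H(f)^*$ is open and additive (by Hopf's Theorem) together with the trivial observation that $1\in H(f)$. Suppose $\frac{1}{n}$ is isolated in $H(f)$ for some $n\ge 2$. Since $H(f)^*$ is open, I would first extract $\eta>0$ small enough that both punctured half-neighborhoods $(\frac{1}{n}-\eta,\frac{1}{n})$ and $(\frac{1}{n},\frac{1}{n}+\eta)$ are contained in $H(f)^*\cap(0,1)=V$. Choosing $\eta<\frac{1}{n}$ keeps these half-neighborhoods inside $(0,1)$, which is needed so that the open additive set $V$, rather than the trivial tail $(1,\infty)$, contains the relevant perturbations.

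Next I would exploit additivity to represent $1$ as a sum of elements of $H(f)^*$. Pick any $\epsilon\in(0,\eta/(n-1))$; then both $\frac{1}{n}-\epsilon$ and $\frac{1}{n}+(n-1)\epsilon$ lie in $V\subset H(f)^*$, the former because $\epsilon<\eta$ and the latter because $(n-1)\epsilon<\eta$. Iterating additivity shows that the $(n-1)$-fold sum
\[
(n-1)\left(\tfrac{1}{n}-\epsilon\right)=\tfrac{n-1}{n}-(n-1)\epsilon
\]
also belongs to $H(f)^*$. Adding this to $\frac{1}{n}+(n-1)\epsilon$ yields exactly $1$, so one more application of additivity forces $1\in H(f)^*$.

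The final step is to contradict this conclusion using the trivial chord at the endpoints: since $f(0)=f(1)=0$, taking $x=1$ in the definition of $H(f)$ shows that $1\in H(f)$, i.e.\ $1\notin H(f)^*$. The hypothesis $n\ge 2$ is used in two places, namely to guarantee a genuine two-sided neighborhood of $\frac{1}{n}$ inside $(0,1)$ and to ensure that the $(n-1)$-fold telescoping sum is nontrivial. I do not anticipate any serious obstacle; the main piece of care is the arithmetic balancing of the perturbations, i.e.\ choosing the upper deviation $(n-1)\epsilon$ to cancel the accumulated lower deviation, which the bound $\epsilon<\eta/(n-1)$ handles cleanly.
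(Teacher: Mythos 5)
Your proposal is correct and follows essentially the same route as the paper: both use openness of $H(f)^*$ to place small perturbations of $\tfrac{1}{n}$ on either side into $H(f)^*$, then use additivity to write $1$ as a positive-integer combination of these, contradicting $1\in H(f)$. The paper's version uses a general splitting $n=p+q$ with $1=q(\tfrac{1}{n}-p\epsilon)+p(\tfrac{1}{n}+q\epsilon)$; yours is the special case $p=1$, $q=n-1$.
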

\begin{proof} Let us denote by $c=\frac{1}{n}$. If $c$ is isolated then there is a set $J=(a,c)\cup (c,b)$ in $H(f)^*$.  Because we can write $n=p+q$ with $p$ and $q$ positive integers, 
 $$1=nc=q(c-p\epsilon)+p(c+q\epsilon)$$ and for $\epsilon$ small enough we have $c-p\epsilon$ and $c+q\epsilon$ in $J$.  Since $H(f)^*$ is additive, we have $1$ written as linear combination with positive integer coefficients of two elements in $H(f)^*$, and so it follows that $1\in H(f)^*$. This is in contradiction with $1\not \in H(f)^*$. 
\end{proof}
We observe that we proved a little more here: if $\ell \in H(f)$ then $\frac{\ell}{n}$ cannot be isolated ($n\ge 2$).
What is interesting is that every point in $(0,1)\setminus {\cal P}$ can be isolated. Before we state and prove this let us introduce an important example of a maximal Hopf set. For every $n\in \mathbb N$, we denote by $J_n$ the open interval $(\frac{1}{n+1},\frac{1}{n})$. 

At this point, it appears that we need to make a small distinction of notations: 
$$kA:=A+A+\cdots +A=\{\sum_{i=1}^k a_i|\  a_i\in A\}$$
$$k\cdot A=\{ka|\ a\in A\}\ \ \ \text{for a set of real numbes}\ \ A. $$
In general $kA\not=k\cdot A$, but one can check that if $A$ is an interval these sets are the same.  Therefore, the set 

\begin{equation}\label{generichopf}
V_n=\bigcup_{k=1}^n kJ_n \subset (0,1), 
\end{equation}
gives $U_n=V_n\cup (1,\infty)$ which is an aditive set. The measure of it is 
$$\sum_{k=1}^n m(kJ_n)=\sum_{k=1}^n k\cdot \frac{1}{n(n+1)} =\frac{1}{2}, $$
showing that this is a maximal Hopf set. 
\begin{theorem}
If $a \in (0,1) \setminus {\cal P}$,
then there exists $f \in F$ such that $a \in H(f)$ and $a$ is an isolated point of $H(f)$.
\end{theorem}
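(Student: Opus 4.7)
The plan is to apply Hopf's Theorem (Theorem 2) in reverse: I will construct an open additive set $U$ with $a \notin U$ but with a punctured neighborhood $(a-\epsilon, a) \cup (a, a+\epsilon)$ contained in $U$, and which has $(1,\infty)$ as a maximal interval. Then Hopf's Theorem produces $f \in \mathcal{F}$ with $H(f) = [0,\infty) \setminus U$, and these conditions translate directly to $a \in H(f)$ and $a$ isolated in $H(f)$.

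For the set $U$, I would take the seed $V_0 := (a-\epsilon, a) \cup (a, a+\epsilon)$ and form its additive closure inside $(0,1)$:
$$V := \bigcup_{j \ge 1} (jV_0) \cap (0,1), \qquad U := V \cup (1,\infty).$$
The puncture at $a$ is visible only at the first level: one checks $V_0 + V_0 = (2a-2\epsilon,\, 2a+2\epsilon)$ is a full open interval (the pair $(a-\delta, a+\delta)$ already sums to $2a$), and inductively $jV_0 = (ja - j\epsilon,\, ja + j\epsilon)$ for every $j \ge 2$. To pin down $\epsilon$ I would use the quantity
$$\mu := \inf_{n \ge 1} |a - 1/n|,$$
which is strictly positive because $a \notin \mathcal{P}$ forces each term to be nonzero, and $|a - 1/n| \to a > 0$ as $n \to \infty$, so the infimum is a positive minimum. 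I pick any $\epsilon$ with $\epsilon < \min\{a/2,\, 1-a,\, \mu\}$.

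The verifications then run as follows. The bound $\epsilon < a/2$ gives $j\epsilon < (j-1)a$ for all $j \ge 2$, whence $a \notin (ja-j\epsilon,\, ja+j\epsilon) = jV_0$; combined with $a \notin V_0$ this yields $a \notin U$. The bound $\epsilon < \mu$ gives $|ja - 1| \ge j\epsilon$ for every $j$, so $1 \notin jV_0$ and each $jV_0$ lies entirely on one side of $1$. Additivity of $U$ then follows cleanly: sums involving $(1,\infty)$ exceed $1$, while for $u_i \in j_i V_0 \cap (0,1)$ the sum $u_1+u_2$ lies in $(j_1+j_2)V_0$, which is either an interval contained in $(0,1)$ (hence in $V$) or in $(1,\infty)$. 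Openness is immediate, and $(1,\infty)$ is a maximal interval because $V \subseteq (0,1)$, so $U$ is a Hopf set. The main obstacle is this additivity check, which is sensitive to both $a$ and $1$ slipping into some $jV_0$; the hypothesis $a \notin \mathcal{P}$ enters precisely to guarantee $\mu > 0$, providing exactly the room needed to avoid $1$ while still punching a hole at $a$.
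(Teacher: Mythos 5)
Your proposal is correct, and it reaches the same reduction as the paper --- build an open additive set that omits $a$ but contains a punctured neighborhood of $a$, then invoke Hopf's theorem --- but the construction of that set is genuinely different. The paper locates $a$ strictly inside some $J_n=\left(\tfrac{1}{n+1},\tfrac{1}{n}\right)$ (this is where $a\notin{\cal P}$ enters for them), takes the ready-made maximal Hopf set $U_n=\bigcup_{k=1}^n kJ_n\cup(1,\infty)$, and simply deletes the single point $a$; additivity survives because $x+y=a$ has no solutions with $x,y\in U_n$ (any such sum exceeds $\tfrac{2}{n+1}\ge\tfrac{1}{n}>a$). You instead generate the set from scratch as the additive closure of the punctured interval $V_0=(a-\epsilon,a)\cup(a,a+\epsilon)$, observing that the puncture heals at level two ($jV_0=(ja-j\epsilon,ja+j\epsilon)$ for $j\ge2$), and you use $\mu=\inf_n|a-1/n|>0$ --- which is exactly where $a\notin{\cal P}$ enters for you --- to keep $1$ out of every $jV_0$; your bounds $\epsilon<a/2$ and $\epsilon<1-a$ correctly keep $a$ out of the higher sumsets and keep $V_0$ inside $(0,1)$, and your case analysis for additivity is sound since each $(j_1+j_2)V_0$ is a connected interval missing $1$. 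The paper's route is shorter because the heavy lifting (additivity of $U_n$) was done once beforehand, and as a bonus its $W_n$ is a maximal Hopf set minus a point; your route is more self-contained, does not depend on the $U_n$ machinery, and makes transparent that the only obstruction to isolating $a$ is the accumulation of ${\cal P}$ at $0$, quantified by $\mu$. Both are complete proofs.
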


\begin{proof}
By our assumption, there exists $n \in \mathbb{N}$ such that $\frac{1}{n+1} < a < \frac{1}{n}$. 
We consider the Hopf set $U_n$ defined above, and define $W_n=U_n\setminus \{a\}$. First we see that  $W_n$ is still open. Taking a point off from $U_n$
in general does not preserve the aditivity,  but in this case, there is no problem since the equation $x+y=a$, with $x$, $y$ in $U_n$ is not possible (we are removing a point from the first set in the union defining $U_n$ in (\ref{generichopf})).   
By Hopf's theorem there exists a function $f$ which has $H(f)=[0,1]\setminus W_n$. It is clear that $a$ is isolated in $H(f)$. 
\end{proof}
$$\underset{Figure\ 3,\  a\ \in (\frac{1}{3},\frac{1}{2}) } {\epsfig{file=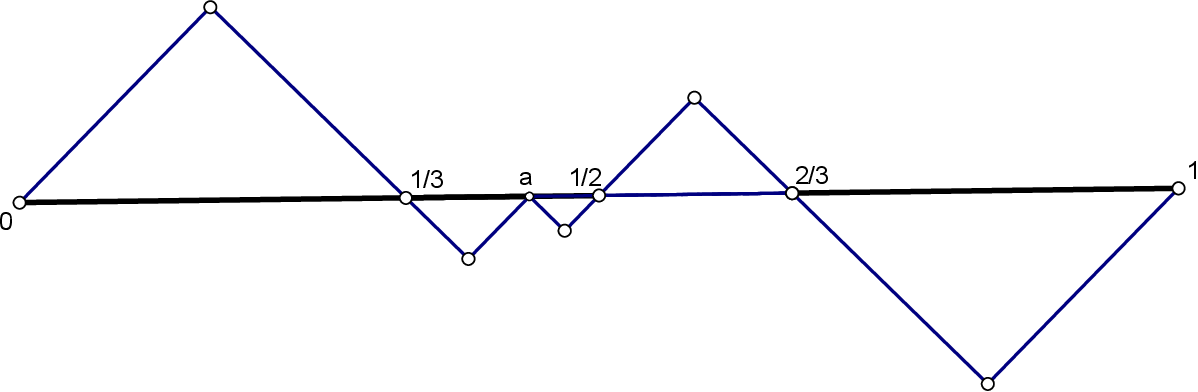,height=1in,width=2in}}$$

In Figure~3, we are including such a standard construction of a function which follows Hopf's idea.

This topological analysis tells us more about open additive sets of non-negative real numbers, and in the next section we are going to analyze that in more detail.

\section{Possible ways to construct  $H(f)^*$}\label{constructions}

The first observation which one can make from the start is that if $U\subset (0,\infty)$ is open and additive, then it must contain an interval of the form $(\alpha,\infty)$. Indeed, since it is open it contains an interval $I=(a,b)$, and we may assume $b<\infty$. If $n>\frac{a}{b-a}$ then for $k\ge n$, $kI$ overlaps with $(k+1)I$ since $(k+1)a<kb$ is equivalent to  $k>\frac{a}{b-a}$. Since $kI$ must be also in $U$, the observation follows. Let us observe that $\alpha$ can be chosen with a minimality condition so we have

\begin{equation}\label{eqalpha}
\alpha \le \inf \{ \alpha_{(a,b)} | \ (a,b)\subset U\}, \ \text{where}\ \ \alpha_{(a,b)}:= (\max(\big\lfloor \frac{a}{b-a} \big \rfloor,1)+1) a,
\end{equation}
where as usual $\lfloor x\rfloor$ is the integer part of $x$ (real number). 
Hence, if we divide the set $U$ by some positive number we may assume that such a set is of the form $U=V\cup (1,\infty)$ with $V\subset (0,1)$.  One would like to provide an explicit construction of these sets. 

The Hopf sets $U_n$ defined in the previous section play an important role in the construction of an open additive set $U$ of positive numbers. From Proposition~\ref{openinterval} we see that $U$ must be included in some interval $(\beta,\infty)$. We will assume that $\beta$ is the greatest number with this property. If $\beta=1$, then we have the $V=\emptyset$, and so $U=(1,\infty)$ which is a trivial example of such sets, in the same class with $U=(0,\infty)$. Let's call these cases exceptional.  

We justified that without loss of generality, if $U$ is not exceptional, we can assume that $U=V\cup (1,\infty)$, with $V$ nonempty $V\in (0,1)$. Because $1$ is not in the set $U$ then the whole set $\cal P$ cannot be in the set. So, we may assume that 

$$V\subset \left[(0,1)\setminus {\cal P}\right]   \cap (\beta,\infty).$$ Let us introduce the symmetry $\phi$ across the point $\frac{1}{2}$, i.e., $\phi(x)=1-x$ with $x\in [0,1]$.  

One observation here is that $\phi(V)$ and $V$ cannot intersect, and these are both subsets of $[0,1]$. Since $\phi$ preserves the Lebesque measure we get that $m(V)+m(\phi(V))\le m([0,1])=1$ or $2m(V)\le 1$. We conclude as in \cite{Diana2024} that $m(V)\le \frac{1}{2}$, where the authors use this idea of reflection across any point $d\in H(f)$.

Let us take the natural number $m$ so that $\frac{1}{m+1}\le\beta <\frac{1}{m}$. We denoted the interval $(\frac{1}{m+1},\frac{1}{m})$ by $J_m$. Let us define $W_k=U\cap k J_m$ with $k\in \{1,2,\cdots m\}$. We know that $W_1$ is non-empty and it is open. Then $U$ contains all the sets $$kW_1=W_1+W_1+\cdots W_1$$ with $k\in \{1,2,\cdots m\}$, but $W_k$ can be strictly bigger than $k\cdot W_1$ (just multiplying by 2).

\subsection{Case $\bf m=1$.}  This is a simple but important case in our constructions. We have $J_1=\left(\frac{1}{2}, 1\right)$ and because the sum of any two numbers in $J_1$ gives a number greater than $1$, the additivity is insured for $J_1$ but also for any subset of $J_1$.   As a result  we can choose a subset $Z$ to be the countable union of open intervals that define a Cantor set in any interval $[a,b]\subset J_1$, with $m(Z)=\frac{1}{2}$. 

In \cite{Diana2024} there is this question about the existence of a function with countably many ``mountain ranges" and countably many ``valley ranges", meaning points of relative maximum or minimum of $f$, and $m(H(f))=\frac{1}{2}$. For  a simple example, one can check that the following map does the trick:

\[
f_D(x) =
\begin{cases}
 -|x \sin\!\left(\frac{\pi}{x}\right)|, & 0 < x \le \tfrac{1}{2}, \\[6pt]
 |(1-x)\sin\!\left(\frac{\pi}{1-x}\right)|, & \tfrac{1}{2} <  x < 1, \\[6pt]
 0, & x \in \{0,1\}.
\end{cases}
\]
$$\underset{Figure\ 4,\  Uncountably \ many \ isolated \ poins } {\epsfig{file=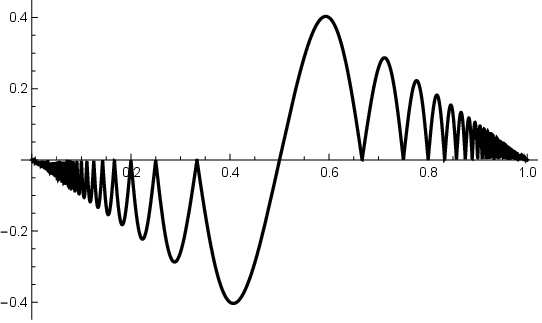,height=1in,width=2in}}$$

In this case we have $H(f_D)=[0,\frac{1}{2}]\cup \{1\}\cup  \bigcup_{j=1}^n \{1-\frac{1}{j}\}$.  To create a whole class of examples as required in \cite{Diana2024}, we need to have the function $f$ defined on the side of $H(f)\supset \left[0,\frac{1}{2}\right]$ with a countable number of zeroes as in Figure~4, and  countable union of open intervals that define a Cantor set $C$ in the interval $[a,b]\subset J_1$, where $f$ has opposite sign than on $[0,1/2]$. We need to make sure $m(C)=0$.  In addition, we have an uncountable number of zeroes. Of course, we can use two Cantor sets to accomplish this.  

\subsection{Case $\bf m=2$.} If $m=2$, let us take $W_1=(\frac{1}{3},\frac{1}{3}+s) \cup (\frac{1}{2}-t,\frac{1}{2})$ with $t+s<\frac{1}{6}$.
Then, we have 
 $$2W_1=W_1+W_1=(\frac{2}{3},\frac{2}{3}+2s)\cup (\frac{5}{6}-t,\frac{5}{6}+s)\cup  (1-2t,1)=(\frac{2}{3},1)$$
 if $2s+t>\frac{1}{6}$ and $s+2t>\frac{1}{6}$. This happens for a lot of values of $t$ and $s$, for instance if $s=t\in (\frac{1}{18},\frac{1}{12})$. Such a set is not a Hopf set, but we can add the set $ Z=(\frac{1}{2},\frac{2}{3})\setminus \phi(W_1)=(\frac{1}{2}+t,\frac{2}{3}-s)$ to make the set a full measure $\frac{1}{2}$ and preserve the additivity since any two numbers $x\in Z$ and $y\in W_1\cup 2W_1$ give $x+y$ either in $(1,\infty)$ or in $(\frac{2}{3},1)$.
This idea can be generalized. 

\begin{theorem}\label{maxHopf} Any Hopf set which is a finite union of intervals can be extended to a maximal Hopf set. 
\end{theorem}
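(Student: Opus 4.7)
The plan is to exhibit an explicit extension. Given a Hopf set $U = V \cup (1, \infty)$ where $V \subset (0,1)$ is a finite union of open intervals, and with $\phi(x) = 1 - x$, I define
$$A := \left(\tfrac{1}{2}, 1\right) \setminus \overline{\phi(V)}, \qquad V' := V \cup A,$$
and claim that $U' := V' \cup (1, \infty)$ is a maximal Hopf set. The intuition is that $A$ occupies everything in the upper half $(\tfrac{1}{2},1)$ that is not blocked by the reflection of $V$ into the lower half, so that the measure count and the additivity condition fall out together.

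I would first record preliminary observations: additivity of $U$ and $1 \notin U$ give $V \cap \phi(V) = \emptyset$ (otherwise $v + \phi(v) = 1 \in U$), and in particular $\tfrac{1}{2} \notin V$. Because $V$ is open and disjoint from $\phi(V)$, no point of $V$ is a limit point of $\phi(V)$, so $V \cap \overline{\phi(V)} = \emptyset$; consequently $V \cap (\tfrac{1}{2}, 1) \subset A$, giving $V' \cap (\tfrac{1}{2}, 1) = A$ and $V' \cap (0, \tfrac{1}{2}) = V \cap (0, \tfrac{1}{2})$. The set $V'$ is visibly open and a finite union of intervals. Since $\phi$ preserves Lebesgue measure, $m(A) = \tfrac{1}{2} - m(V \cap (0, \tfrac{1}{2}))$, and therefore $m(V') = \tfrac{1}{2}$.

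The substantive step is additivity of $U'$. Writing $V_L := V \cap (0, \tfrac{1}{2})$ and $V_R := A$, the inclusions $V_L + V_L \subset V + V \subset U \subset U'$ and $V_R + V_R \subset (1, 2) \subset U'$ are immediate. For the cross term, take $v \in V_L$ and $w \in V_R$ with $v + w \le 1$. The equality $v + w = 1$ is ruled out because it would give $w = \phi(v) \in \phi(V) \subset \overline{\phi(V)}$, contradicting $w \in A$. Assuming instead $v + w \in \overline{\phi(V)}$ and picking $x_n \in \phi(V)$ with $x_n \to v + w$, we have $1 - x_n \in V$, and additivity of $U$ yields $v + (1 - x_n) \in U$; the limit $1 - w$ is strictly less than $1$ (since $w > \tfrac{1}{2}$), forcing $v + (1 - x_n) \in V$ for large $n$, so $1 - w \in \overline{V}$, i.e., $w \in \overline{\phi(V)}$ --- a contradiction. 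Therefore $v + w \in A \subset V'$.

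The main obstacle I expect is exactly the design of $A$. A naive first candidate, $(\tfrac{1}{2}, 1) \setminus \overline{V \cup \phi(V)}$, fails: at a point where a component of $V$ abuts the gap, that boundary point can be realized as a sum $v + w$ but lies in neither part of $V'$, breaking additivity. Removing only $\overline{\phi(V)}$ --- and not $\overline{V}$ --- from $(\tfrac{1}{2}, 1)$ is precisely what fuses each such $V$-component with its adjacent gap-component into a single open interval of $V'$, preserving openness while securing closure under addition.
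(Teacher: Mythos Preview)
Your proof is correct and follows essentially the same construction as the paper: both extend $U$ by adjoining $(\tfrac12,1)\setminus\overline{\phi(V\cap(0,\tfrac12))}$ to $V\cap(0,\tfrac12)$ (your set $A$ coincides with the paper's $B$, since $\overline{\phi(V)}\cap(\tfrac12,1)=\overline{\phi(V\cap(0,\tfrac12))}\cap(\tfrac12,1)$), and both check openness, measure $\tfrac12$, and additivity via the same lower/upper case split. Your treatment of the cross term $V_L+V_R$ is in fact a bit cleaner than the paper's---you pass to the limit $v+(1-x_n)\to 1-w\in\overline{V}$ directly, whereas the paper invokes a pigeonhole on the finitely many components of $U$---so that the finite-union hypothesis is used in your argument only for the measure identity $m(\overline{\phi(V)})=m(\phi(V))$.
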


\par\vspace{0.1in}

\begin{proof} For a set of real numbers $S$, we denote the topological closure of $S$, as usual, by $\overline{S}$. Let $U=V\cup (1,\infty)$ a Hopf set with $V\subset (0,1)$ and define $A=V\cap (0,\frac{1}{2})$ and $B=(\frac{1}{2},1)\setminus \overline{\phi(A)}$. We want to prove that $\overset{\sim}{U}=A\cup B\cup (1,\infty)$ is a maximal Hopf set containing $U$.  It is clear that $\overset{\sim}{U}$ is an open set. By hypothesis $A$ is a finite union of open intervals, and as a result $m(A)=m(\overline{A})$.  Hence, we have
$$m(B)=m((\frac{1}{2},1))-m( \overline{\phi(A)})=\frac{1}{2}-m(A) \implies m(A\cup B)=\frac{1}{2},$$
\n which means  $\overset{\sim}{U}$ is maximal in terms of measure. To prove  that $\overset{\sim}{U}$ contains $U$, let us assume by way of contradiction that $U\setminus \overset{\sim}{U}\not = \emptyset $. The only way this can happen is if $\overline{\phi(A)}\cap U \not = \emptyset$. So, let $z$ a point in this intersection. 

First, let us assume that $z\in \phi(A)$. Then $z=1-a$ for some $a\in A$. Then $1=a+z\in U$ by aditivity of $U$, a contradiction with $U\subset (0,1)\cup (1,\infty)$. 

If $z$ is in $\overline{\phi(A)}$ and in $U$, because $U$ is open,  there must be some $\epsilon >$ so that 
$I=(z-\epsilon,z+\epsilon)\subset U$. Because $z\in  \overline{\phi(A)}$, then $z_n\to z$ for some sequence $\{z_n\}$ in $\phi(A)$. Hence, for $n$ big enough, $z_n\in I\subset U$, which contradicts the case we just analyzed earlier. 

We conclude that $U\subset \overset{\sim}{U}$. It remains to show that $\overset{\sim}{U}$ is additive. For this purpose, let $x,y\in \overset{\sim}{U}$ be some arbitrary elements.  We have the following posibilities.
\par\vspace{0.1in}

\n {\bf Case (i): $\bf x,y\in (\frac{1}{2},1)$. } In this situation $x+y>1$ and so $x+y\in  \overset{\sim}{U}$. 
\par\vspace{0.1in}
\n {\bf Case (ii): $\bf x,y\in (0,\frac{1}{2})$. } So, $x$ and $y$ are in $A$. Then $x+y$  is in $U\subset \overset{\sim}{U}$.  
\par\vspace{0.1in}

\n {\bf Case (iii): $\bf x\in A$ and $\bf y\in B$. } Let us assume by way of contradiction that $x+y\in  \overline{\phi(A)}$. This means $x+y=1-w$ with $w_n\to w$ and $\{w_n\}$ a sequence in $A$. 

Because $x\in A\subset V$ and $V$ is open, then there exists an interval $(x-\eta,x+\eta)\subset V$ for some $\eta>0$. Using the additivity of $U$, we have  $(x+w_n-\eta,x+w_n+\eta)\subset U$ for all $n$. By hypothesis, $U$ is a finite union of open intervals. So, for infinitely many $n$,  $(x+w_n-\eta,x+w_n+\eta)$ is a subset of the same open interval in $U$, say $(a,b)$. 

Letting $n$ go to infinity on this subsequence, we obtain that  $[x+w-\eta,x+w+\eta]\subset [a,b]$.  This shows that actually $x+w\in (a,b)\in U$.  Now, $x+w=1-y<\frac{1}{2}$ and so $x+w\in A$. This implies $y=1-(x+w)\in \phi(A)$ in contradiction with $y\in B$. It remains that $x+y\not \in  \overline{\phi(A)}$ which implies $x+y>1$ or $x+y\in B$. In either case, we have $x+y\in \overset{\sim}{U}$ closing the argument for additivity of $\overset{\sim}{U}$. 
\end{proof}

\begin{corollary}\label{picksinwn} For $n\in \mathbb N$, $n\ge 2$, and $W$ a set in $(\frac{1}{n+1},\frac{1}{n})$ which is finite union of open intervals,  we define $A=\bigcup_{j=1}^k jW$,  where $k=\lfloor \frac{n}{2}\rfloor$ 
and $B=(\frac{1}{2},1)\setminus \overline{\phi(A)}$. Then the set $\overset{\sim}{U}=A\cup B\cup (1,\infty)$ is a maximal Hopf set. 
\end{corollary}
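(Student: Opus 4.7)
The plan is to verify the four defining conditions of a maximal Hopf set directly for $\overset{\sim}{U}$: openness, the decomposition $V\cup(1,\infty)$ with $V\subset(0,1)$, the measure equality $m(V)=\tfrac{1}{2}$, and additivity. A tempting shortcut is to first check that $U_0:=A\cup(1,\infty)$ is itself a Hopf set and then invoke Theorem~\ref{maxHopf}, but this fails in general: for indices $i,j\leq k$ with $k<i+j\leq n$, the set $(i+j)W$ lies in $(\tfrac{1}{2},1)$ and typically escapes $A\cup(1,\infty)$, so $U_0$ need not be additive. I would therefore argue directly for $\overset{\sim}{U}$, following the template of the proof of Theorem~\ref{maxHopf} but adapting the additivity step.

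Openness is immediate since each $jW$ is a Minkowski sum of a finite union of open intervals (hence open), $B$ is $(\tfrac{1}{2},1)$ minus a closed set, and $(1,\infty)$ is open. For the form, $jW\subset(j/(n+1),j/n)$ for $1\leq j\leq k$, and $k/n\leq\tfrac{1}{2}$ by definition of $k$, so $A\subset(0,\tfrac{1}{2})$; hence $A,B$ are disjoint and $V=A\cup B\subset(0,1)$. Maximality follows exactly as in Theorem~\ref{maxHopf}: since $A$ is a finite union of open intervals, $m(\overline{\phi(A)})=m(A)$, giving $m(B)=\tfrac{1}{2}-m(A)$ and $m(V)=\tfrac{1}{2}$.

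The real work is additivity. For $x,y\in\overset{\sim}{U}$, the easy cases ($\max(x,y)>1$, or $x,y\in B$ so $x+y>1$) land in $(1,\infty)$; and $x+y=1$ is impossible since it would force $y=1-x\in\phi(A)$, disjoint from $B$. Two sub-cases remain, both with $x+y\in(\tfrac{1}{2},1)$: (i) $x,y\in A$ with $x\in iW$, $y\in jW$, $i,j\leq k$, and $i+j>k$ (the subcase $i+j\leq k$ immediately gives $x+y\in(i+j)W\subset A$); (ii) $x\in A$, $y\in B$. In both, I would argue by contradiction that $x+y\notin\overline{\phi(A)}$: assume $a_m\in A$ with $1-a_m\to x+y$, and by the pigeonhole principle pass to a subsequence with $a_m\in\ell W$ for some fixed $\ell\leq k$, so that $1-(x+y)\in\overline{\ell W}\subset[\ell/(n+1),\ell/n]$.

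In sub-case (i), the open inclusion $x+y\in(i+j)W$ also gives $1-(x+y)\in(1-(i+j)/n,\,1-(i+j)/(n+1))$, and overlap of this open interval with $[\ell/(n+1),\ell/n]$ forces simultaneously $\ell+i+j>n$ and $\ell+i+j<n+1$, impossible for integers. In sub-case (ii), $x+a_m\in iW+\ell W=(i+\ell)W$ converges to $1-y$, so $1-y\in\overline{(i+\ell)W}\subset[(i+\ell)/(n+1),(i+\ell)/n]$; since $y>\tfrac{1}{2}$ gives $1-y<\tfrac{1}{2}$, this forces $(i+\ell)/(n+1)<\tfrac{1}{2}$, whence $i+\ell\leq k$, so $(i+\ell)W\subset A$ and $1-y\in\overline{A}$, giving $y\in\overline{\phi(A)}$, contradicting $y\in B$. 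The hard part will be the bookkeeping in sub-case (i), in particular verifying that $i+j>k$ already forces $x+y>\tfrac{1}{2}$ (tight when $n$ is odd and $i+j=k+1$, since $(k+1)/(n+1)=\tfrac{1}{2}$ there); this sharp inequality is exactly why $k=\lfloor n/2\rfloor$ is the correct threshold and the integer incompatibility closes the argument.
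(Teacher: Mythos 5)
Your proof is correct, but it takes a genuinely different and considerably longer route than the paper's. The shortcut you considered and rejected --- applying Theorem~\ref{maxHopf} to $A\cup(1,\infty)$ --- is indeed not viable for exactly the reason you give, but the paper uses a different reduction that does work: set $V=\bigcup_{j=1}^{n} jW$ (the union up to $n$, not up to $k$). Then $U=V\cup(1,\infty)$ is additive, because $iW+jW=(i+j)W\subset V$ when $i+j\le n$, while $iW+jW\subset\left(\frac{i+j}{n+1},\infty\right)\subset(1,\infty)$ when $i+j\ge n+1$; it is a finite union of intervals, since Minkowski sums of finite unions of open intervals are again such; and since $jW\subset\left(\frac{j}{n+1},\frac{j}{n}\right)$, one checks that $V\cap\left(0,\frac{1}{2}\right)=\bigcup_{j=1}^{k}jW=A$ precisely because $k=\lfloor n/2\rfloor$. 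Hence the construction in the proof of Theorem~\ref{maxHopf} applied to this Hopf set $U$ produces exactly $\overset{\sim}{U}$, and the corollary is immediate. Your direct verification buys self-containedness and is sharper in one respect: the pigeonhole on the index $\ell$ together with the explicit containments $\ell W\subset\left(\frac{\ell}{n+1},\frac{\ell}{n}\right)$ replaces the more delicate ``finitely many component intervals'' argument of Case (iii) in Theorem~\ref{maxHopf}, and your integer-incompatibility count ($n<\ell+i+j<n+1$) cleanly settles where the sums $iW+jW$ with $k<i+j\le n$ land (namely in $B$), a point the paper's one-line reduction leaves implicit inside Theorem~\ref{maxHopf}. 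The cost is that you re-prove, in the special case at hand, material the paper already established in that theorem.
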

\par\vspace{0.1in} 
\begin{proof} If $W_1$ and $W_2$ are finite union of open intervals, then it is easy to see that $W_1+W_2$ is also a finite union of open intervals. We let $V=\bigcup_{j=1}^n jW$ and we observe that, 
by our choice of $k=\lfloor \frac{n}{2}\rfloor$, $A=V\cap (0,1/2)$ and $U=V \cup (1,\infty)$ is then open and additive, i.e., an Hopf set. By our construction in the Theorem ~\ref{maxHopf}, the set $\overset{\sim}{U}$ is exactly the one obtained from $U$. \end{proof}

In particular, if $n=2$ and $W=(\frac{2}{5}, \frac{1}{2})$ then this construction gives $ \overset{\sim}{U}=(\frac{2}{5}, \frac{1}{2})\cup (\frac{3}{5},1)\cup (1,\infty)$. We observe that there is actually a connection with number theory here. If we multiply the set by $10$ we get 
$$10\cdot \overset{\sim}{U}=(4,5)\cup (6,10)\cup (10,\infty),$$
which is a union of three open intervals with positive integers endpoints, additive,  and it is maximal in the sense that the sum of the lengths is half the highest (finite) of the endpoints. In addition, the set is  {\sf primitive} in the sense that it cannot be divided by some positive integer and still have integer endpoints.   Let's call such a set a {\sf integer primitive maximal Hopf} set. It is quite easy to show that there exists only one integer primitive maximal Hopf set with two intervals: $(1,2)\cup (2,\infty)$. 
\par\vspace{0.1in}  It is natural to wonder about the answer to the  following question.

\n {\bf Question:}  {\sf  ``What are all integer primitive maximal Hopf sets with $n$ intervals?"}
 An example for $n=8$ is 

\begin{equation}\label{newexample}
(25,26) \cup (40,44)\cup (45,48)\cup (50,52)\cup (55,56)\cup (60,74)\cup (75,100).
\end{equation}

For $n=3$, it is not hard to see that the characterization is given by 
$$(a,b)\cup (2b-a,2b)\cup (2b,\infty),$$
\n for all positive integers $a$, $b$ relatively prime, so that $b\in (a,\frac{3a}{2}]$. For $a=4$, we get $b\in (4,6]$ we obtain the only possible set described above. For $a=5$ we obtain two sets:
$$(5,6)\cup (7,12)\cup (12,\infty)\  \ \text{and}\ \  (5,7)\cup (9,14)\cup (14,\infty). $$
An example with four intervals can be obtained as in Corollary~\ref{picksinwn}:
$$H(h)^*=(4,5)\cup (6,7)\cup (8,12)\cup (12,\infty)$$
\n and a graph for a corresponding function  $h$ is 
$$\underset{Figure\ 5.\  Function \ h \ \    }{\epsfig{file=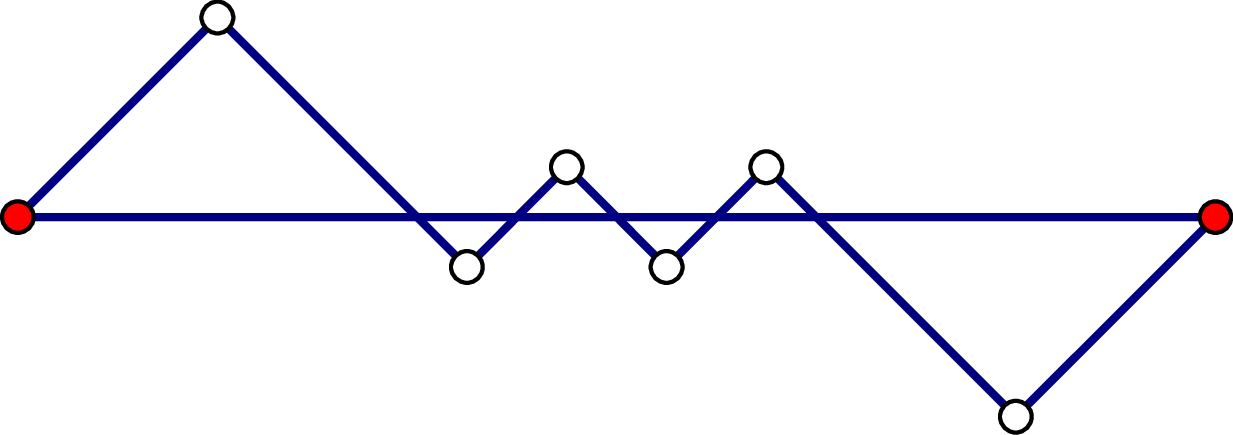,height=1in,width=2in}}$$ 

We observe from these examples that there is a ($180^{\circ}$  rotational) symmetry with respect to 1/2 of the standard Hopf functions associated to them. Let's make this precise.

\begin{theorem}\label{symmetry} Let us consider a function $f\in {\cal F}$ with  $U=H(f)^*$,  with $U=V\cup (1,\infty)$,  a maximal Hopf set.  Then we have $H(f)=\overline{\phi(V)}\cup  \partial V \cup \{0,1\}$ (where $\partial V$ is the boundary of $V$).
\end{theorem}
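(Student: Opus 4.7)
The plan is to translate the claim into the equivalent statement
\[
[0,1]\setminus V \;=\; \overline{\phi(V)}\cup \partial V \cup \{0,1\},
\]
using that $H(f)^*=V\cup(1,\infty)$ together with $H(f)\subset[0,1]$ forces $H(f)=[0,1]\setminus V$. I would then prove the two set inclusions separately, handling the harder disjointness first.

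For the $\supseteq$ inclusion it suffices to show that each of $\overline{\phi(V)}$, $\partial V$ and $\{0,1\}$ is disjoint from $V$. The relations $\partial V\cap V=\emptyset$ and $\{0,1\}\cap V=\emptyset$ are immediate from the openness of $V$ and from $V\subset(0,1)$. The substantive step is $V\cap \overline{\phi(V)}=\emptyset$. Suppose, for contradiction, that $z\in V\cap \overline{\phi(V)}$, and choose a sequence $w_n\in V$ with $1-w_n\to z$, i.e.\ $w_n\to 1-z$. Since $V$ is open, there exists $\delta>0$ with $(z-\delta,z+\delta)\subset V$. By additivity of $U$, for every $y\in(z-\delta,z+\delta)$ we have $y+w_n\in U$, hence the translated interval $(z+w_n-\delta,\,z+w_n+\delta)\subset U$. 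Because $z+w_n\to z+(1-z)=1$, for all sufficiently large $n$ the point $1$ lies inside this open interval, giving $1\in U$ and contradicting $1\notin U$.

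For the $\subseteq$ inclusion I would exploit maximality. The case just handled gives in particular $V\cap\phi(V)=\emptyset$, and since $\phi$ preserves Lebesgue measure, $m(V\cup\phi(V))=2\cdot\tfrac12=1$. Thus $V\cup\phi(V)$ is an open subset of $(0,1)$ of full measure; any open set whose closure misses a subinterval of $(0,1)$ has measure strictly less than $1$, so $V\cup\phi(V)$ is dense in $(0,1)$. Using that $\phi$ is a homeomorphism of $[0,1]$ and that $\overline{V}=V\cup\partial V$, this yields
\[
(0,1)\;\subset\;\overline{V}\cup\overline{\phi(V)}\;=\;V\cup\partial V\cup\overline{\phi(V)}.
\]
Adjoining the endpoints gives $[0,1]\subset V\cup\partial V\cup\overline{\phi(V)}\cup\{0,1\}$, and subtracting $V$ from both sides — legitimate by the $\supseteq$ direction already established — produces the required inclusion.

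The main obstacle is the closure step in the first paragraph: the bare disjointness $V\cap\phi(V)=\emptyset$ is a one-line application of additivity plus $1\notin U$, but extending it to $\overline{\phi(V)}$ requires simultaneously using the openness of $V$ (to get a whole interval around $z$) and additivity (to translate that interval by each $w_n$), so that $z+w_n\to 1$ actually forces $1$ into the \emph{open} set $U$ rather than only into its closure. Once that is in hand, the measure-theoretic density argument closes the reverse inclusion cleanly.
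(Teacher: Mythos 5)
Your proof is correct and follows essentially the same route as the paper: one inclusion from the disjointness of $V$ and $\phi(V)$ (additivity plus $1\notin U$) together with a closure step, and the other from the observation that $\overline{V}\cup\overline{\phi(V)}$ has full measure and therefore covers all of $(0,1)$. The only cosmetic difference is your sequence-and-additivity argument for $V\cap\overline{\phi(V)}=\emptyset$, which can be shortened: once $\phi(V)\subset[0,1]\setminus V$ is known, the right-hand side is closed (since $V$ is open), so it already contains $\overline{\phi(V)}$ — this is exactly how the paper handles that step.
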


\begin{proof} We prove this by double inclusion. Since $H(f)=[0,1]\setminus V$ and $\phi(V)\cap V=\emptyset$, it follows that  $\phi(V)\subset H(f)$.  Applying the closure endofunctor gives $\overline{\phi(V)}\subset \overline{H(f)}=H(f)$  since $H(f)$ is a closed set. On the other hand $\partial H(f)=\partial V$ which implies $\partial V \subset H(f)$. We have then one inclusion:
$$\overline{\phi(V)}\cup  \partial V \cup \{0,1\}\subset H(f),$$
\n since we have proved that $\{0,1\}\subset H(f)$. (What we have shown here is true for every Hopf set $H(f)^*$). 

 For the other inclusion, we start again with $\phi(V)\cap V=\emptyset$ and first conclude that  $$m(\phi(V)\cup V)=m(\phi(V))+m(V)=\frac{1}{2}+\frac{1}{2}=1.$$ Hence, we must also have $m(\overline{V} \cup \overline{\phi(V)})=1$ as $\overline{V} \cup \overline{\phi(V)}\subset [0,1]$. 
Let us consider the set $$X=(0,1)\setminus( \overline{V} \cup \overline{\phi(V)}),$$ 
which is by construction an open set. But with what we have shown earlier $m(X)=0$. Therefore, $X$ being open and of measure zero, it must be the empty set. As a result, we have 
$$[0,1]=  \overline{V} \cup \overline{\phi(V)}) \cup \{0,1\}=V\cup \partial V  \cup \overline{\phi(V)}) \cup \{0,1\},$$

\n which proves the other inclusion. 
\end{proof}
In particular, if $H(f)^*$ is a finite union of intervals, the $\partial V$ and $\partial \phi(V)$ are finite sets. So, we get that essentially $H(f)$ is $\phi(V)$, and so the symmetry we observed follows by the construction of $f$. 
\section{Conclusions and open questions}\label{incheiere}
The concepts we introduced and studied here appear to be new. The mathematics literature contains well known notions such as Hopf algebras and Hopf fibrations but it is quite surprising that the Hops's theorem in 1937 (Theorem~\ref{hopf}) didn't open up the algebraic lines of investigations that we just barely looked into in  this article. The connections with various branches of mathematics are clearly visible and we pointed out to a few: number theory, additive number theory, algebraic combinatorics, algebra, analysis and topology, descriptive set theory and one may think of many others. Theorem~\ref{maxHopf} begs for a more general statement and we believe that under a some mild restrictions it can be generalized to arbitrary unions of open sets. In connection with integer maximal Hopf sets, one can ask a lot more questions. Another one is  also quite natural: ``are there examples of such sets which are not the result of our construction in Corollary~\ref{picksinwn} ?" A closer look at the example (\ref{newexample}) shows that the answer is ``Yes" and so one can investigate other ways of constructing integer maximal Hopf sets. 

We close with a problem that generated our interest initially for this topic.  Let us consider the function $g_n(x)=\sum_{k=1}^n \sin (2\pi kx)$., $x\in [0,1]$.
\par\vspace{0.1in}

\n {\bf Conjecture  K} Show that $H(g_n)^*$ is the set $V_n$ defined in (\ref{generichopf}). 

In Figure~5, we observe that for certain chord lengths there exists several ways to obtain them on the graph of 
$h$.  
 In connection to this, there is a problem of  D. J. Newman proposed in 1963 in this Monthly with a solution by N. J. Fine (\cite{Newman1963}) in the following year, asking equivalently

 \begin{theorem}\label{Newman63}
For each $n \in \mathbb{N}$ and each $f \in \mathcal{F}$, 
there exist at least $n$ distinct chords of $f$ whose lengths are integer multiples of $\frac{1}{n}$.
\end{theorem}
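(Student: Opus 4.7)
The plan is to extend $f$ to a continuous $1$-periodic function $\tilde f:\mathbb{R}\to\mathbb{R}$ (allowed because $f(0)=f(1)=0$) and transfer the problem onto the circle $S^1=\mathbb{R}/\mathbb{Z}$. For each $k\in\{1,\dots,n-1\}$ I would consider
\[
G_k(x):=\tilde f\!\left(x+\tfrac{k}{n}\right)-\tilde f(x),\qquad x\in S^1,
\]
a continuous function with $\int_{S^1}G_k\,dx=0$ by translation-invariance of the integral. Consequently either $G_k\equiv 0$ or $G_k$ takes both strictly positive and strictly negative values; in the latter case, travelling along each of the two arcs of $S^1$ from a positive value to a negative one produces a zero via the intermediate value theorem, so $G_k$ has at least two distinct zeros on $S^1$. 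If $G_k\equiv 0$ for some $k$, then $\tilde f$ is $(k/n)$-periodic, infinitely many chords of length $k/n$ appear at once, and the conclusion is immediate.

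Writing $Z_k$ for the zero set of $G_k$ on $S^1$, I would next match zeros with chords of $f$ on $[0,1]$. A zero $x\in[0,1)$ of $G_k$ with $x+k/n\le 1$ gives a chord $(x,x+k/n)$ of length $k/n$; a zero with $x+k/n>1$ gives, by periodicity, the chord $(x+k/n-1,x)$ of length $(n-k)/n$. Conversely, every chord $(a,b)$ of $f$ of length $j/n$ with $1\le j\le n-1$ produces exactly two zeros across the collection $Z_1,\dots,Z_{n-1}$: the left endpoint $a\in Z_j$ and the right endpoint $b\in Z_{n-j}$, the latter because
\[
G_{n-j}(b)=\tilde f\!\left(b+\tfrac{n-j}{n}\right)-\tilde f(b)=\tilde f(a+1)-\tilde f(b)=f(a)-f(b)=0.
\]

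Let $C$ denote the set of distinct chords of $f$ whose length lies in $\{1/n,\dots,(n-1)/n\}$. The correspondence yields the double-counting identity $\sum_{k=1}^{n-1}|Z_k|=2|C|$, while the circle step gives $|Z_k|\ge 2$ for every $k$, so $2|C|\ge 2(n-1)$ and hence $|C|\ge n-1$. Adjoining the trivial chord $(0,1)$ of length $1=n/n$ (which lies outside $C$ by the restriction $j\le n-1$) then delivers at least $n$ distinct chords of $f$ with lengths that are integer multiples of $1/n$.

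The main obstacle is to verify the double-counting formula with care. When $n$ is even and $j=n/2$, both endpoints $a$ and $b=a+1/2$ of a chord of length $1/2$ land in the same set $Z_{n/2}$ rather than being split between two distinct $Z_k$'s; however, they remain two distinct points and so contribute two distinct zeros to $|Z_{n/2}|$, preserving the identity. The case in which some $|Z_k|$ is infinite can be absorbed by observing that $|C|$ is then already infinite and the conclusion is trivial. Apart from this bookkeeping, the proof rests only on periodic extension, the vanishing of $\int_{S^1}G_k$, and the elementary fact that a continuous function on $S^1$ with zero mean that is not identically zero has at least two zeros.
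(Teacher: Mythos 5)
Your argument is sound in structure and, with one repair, gives a complete proof. Note that the paper does not actually prove Theorem~\ref{Newman63}; it only invites a proof by induction from the UCT. Your route is therefore genuinely different and self-contained: the zero-mean observation $\int_{S^1}G_k\,dx=0$ on the circle replaces the usual telescoping/sign-change argument, and it cleanly produces the key bound $|Z_k|\ge 2$ for every $k$, together with the dichotomy $G_k\equiv 0$ versus ``$G_k$ changes sign.''

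The one step that is stated incorrectly is the double-counting \emph{identity} $\sum_{k=1}^{n-1}|Z_k|=2|C|$. The map sending a (chord, endpoint) pair to the corresponding zero is surjective, as your case analysis shows, but it need not be injective: two distinct chords of complementary lengths $j/n$ and $(n-j)/n$ can share an endpoint, and that single point of $Z_j$ is then simultaneously the left-endpoint zero of one chord and the right-endpoint zero of the other (the wrap-around identification $1\equiv 0$ on $S^1$ produces the same phenomenon). Concretely, for $f(x)=\sin(2\pi x)$ and $n=2$ one has $C=\{(0,\tfrac12),(\tfrac12,1)\}$ but $Z_1=\{0,\tfrac12\}$, so $\sum_k|Z_k|=2<4=2|C|$. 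The obstacle you single out ($j=n/2$) is indeed harmless, exactly as you say, but it is not the real one. Fortunately the failure of injectivity only pushes the count in the direction you need: surjectivity alone gives $\sum_{k=1}^{n-1}|Z_k|\le 2|C|$, which combined with $|Z_k|\ge 2$ still yields $|C|\ge n-1$, and adjoining the chord $(0,1)$ gives the required $n$ chords. So replace the equality by ``$\le$,'' justified by the surjectivity you already established; everything else stands.
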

 
We invite the reders to prove this by induction using the UCT. We observe that for each $\ell \in {\cal P}$ we can define a vector $v(\ell)=[m_1,m_2,...,m_n]$ with $m_i$ non-negative integers that represent the number of instances the chord $\ell$ appears in the graph of $f$.  Clearly, we have  $m_1\ge 1$ by UCT, $m_n=1$ and $\sum_{k}m_k\ge n$ by Theorem~\ref{Newman63}. Knowing this information tells us more about the function $f$, so a natural question here is if this vector function on $\cal P$, determines the function $f$ up to certain transformations that leave the set $H(f)$ invariant. 
Such transformations include $f\to h\circ f$ and $f\to f\circ \phi$, with $h:\mathbb R \to \mathbb R$ continuous bijection, $h(0)=0$, and $\phi :[0,1]\to [0,1]$, the symmetry with respect to $1/2$ we used earlier, $\phi(x)=1-x$, $x\in [0,1]$.

\bibliographystyle{vancouver}
\bibliography{VancouverExamples.bib}

@Article{	  Ampere,
author   = {Ampère, André-Marie},
  title		= { Recherches sur quelques points de la theorie des fonctions derivees qui conduisent a une nouvelle demonstration de la serie de
Taylor, et a l’expression finie des termes qu on neglige lorsqu on arrete
cette serie a un terme quelconque},
  journal	= { J. Ecole Polytechique},
  volume	= {6},
  number	= {13},
  pages		= {148--181},
  year		= 1806
}

@Article{	  Levy,
author   = {L\'evy, Paul},
  title		= { Sur une G\'en\'eralisation du Theoreme de Rolle.},
  journal	= { C. R. Acad. Sci.},
  volume	= {198},
  number	= {13},
  pages		= {424--425},
  year		= 1934
}

@Article{	  Hopf,
author   = {Hopf, Heinz},
  title		= { Uber die Sehnen ebener Kontinuen und die Schleifen ¨
geschlossener Wege},
  journal	= { Comment. Math. Helv.},
  volume	= {198},
  number	= {13},
  pages		= {303--319},
  year		= 1937
}

@Article{	Burns,
author   = {Burns, K and Davidovich, O and Davis, D},
  title		= { Average Pace and Horizontal Chords},
  journal	= { Math Intel},
  volume	= {39},
  number	= {4},
  pages		= {41--45},
  year		= 2017
}

@Article{Newman1963,
author   = {Newman, D. J. and Fine, N. J. },
  title		= {An extended chord theorem},
  journal	= {Amer Math Monthly},
  volume	= {71},
  number	= {1},
  pages		= {104},
  year		= 1964
}

@Article{Diana2024,
author   = {Davis, D.  and Trobetzkoy, S. },
  title		= {The horizontal chord set},
  journal	= {Amer Math Monthly},
  volume	= {131},
  number	= {6},
  pages		= {519-525},
  year		= 2024
}

@Article{Levit,
author   = {Levit, R. J.  },
  title		= {The finite difference extention of Rolle's Theorem},
  journal	= {Amer Math Monthly},
  volume	= {70},
  number	= {1},
  pages		= {26-30},
  year		= 1963
}

Ion Ciudin, Colegiul National ``Mihai Eminescu", Boto\c {s}ani, Romania, Email Address: ciudini@yahoo.com

\vspace{15px}

Eugen J. Iona\c{s}cu, Columbus State Unversity, Columbus, Georgia, Email Address:    
nnueminescu@outlook.com, ionascu@columbusstate.edu
\end{document}